\theoremstyle{definition}
		\newtheorem{theorem}{Theorem}[section]
				\newtheorem{proposition}[theorem]{Proposition}
     	        \newtheorem{definition}[theorem]{Definition}
	            \newtheorem{remark}[theorem]{Remark}
\numberwithin{equation}{section}
\newcommand*{\bR}{\ensuremath{\mathbb{R}}}
\newcommand*{\Wert}{\mathord{\mbox{|\kern-1.5pt|\kern-1.5pt|}}}
\newcommand*{\ie}{\mbox{i.e.}\xspace}
\DeclareMathOperator{\Lip}{Lip}
\def\XXint#1#2#3{{\setbox0=\hbox{$#1{#2#3}{\int}$}
  \vcenter{\hbox{$#2#3$}}\kern-.5\wd0}}
\title[Intrinsic geometry and analysis of Finsler structures]{Intrinsic geometry and analysis of Finsler structures}
\author{Chang-Yu Guo}
\address[Chang-Yu Guo]{Department of Mathematics and Statistics, University of Jyv\"askyl\"a, P.O. Box 35, FI-40014, Jyv\"askyl\"a, Finland and Department of Mathematics, University of Fribourg, CH-1700, Fribourg, Switzerland}
\email{changyu.guo@unifr.ch}
\subjclass[2010]{58J60,46E99}
\keywords{Finsler structure, dual Finsler structure, intrinsic distance, Lipschitz constant}
\thanks{C.Y.Guo was supported by the Magnus Ehrnrooth foundation.}
\begin{document}
\maketitle

\begin{abstract}
In this short note, we prove that if $F$ is a weak upper semicontinuous admissible Finsler structure on a domain in $\bR^n$, $n\geq 2$, then the intrinsic distance and differential structures coincide.   
\end{abstract}

%\tableofcontents

\section{Introduction}

Let $\Omega\subset\bR^n$, $n\geq 2$, be a domain and $F$ an admissible Finsler structure on $\Omega$ (the precise definition is given in Section~2 below). Associated to $F$, we have the following intrinsic distance defined by
\begin{align}\label{def:delta}
\delta_F(x,y)=\sup_{u}\big\{u(x)-u(y): u \text{ is Lipschitz and } \|F(x, du(x))\|_\infty\leq 1\big\}.
\end{align}
Above, $du(x)$ denotes the differential of the Lipschitz function $u$ at a point $x$. Recall that the well-known Rademacher's theorem implies that $du(x)$ exists at almost every $x\in \Omega$ and thus the above definition makes sense.  
%It should be noticed that in general $d_c^F\neq \delta_F.$
The elliplicity condition on $F$ implies that $\delta_F$ is locally comparable to the standard Euclidean distance. We define the pointwise Lipschitz constant of a Lipschitz function $u:\Omega\to \bR$ by setting
\begin{align*}
\Lip_{\delta_F}u(x)=\limsup_{y\to x}\frac{|u(y)-u(x)|}{\delta_F(x,y)}.
\end{align*}
Given a subset $K$ of $\bR^n$, we set 
\begin{align*}
\Lip_{\delta_F}(u,K)=\sup_{x,y\in K,x\neq y}\frac{|u(x)-u(y)|}{\delta_F(x,y)}
\end{align*}
and denote by $\Lip_{\delta_F}(K)$ the collection of all functions $u:K\to\bR$ with $\Lip_{\delta_F}(u,K)<\infty$. 

Sturm asked the following interesting question in~\cite{s97}: is a diffusion process determined by the intrinsic distance? Mathematically, Sturm's question can be formulated as follows: is it true that for all $u\in \Lip_{\delta_F}(\Omega)$,
\begin{align*}
F(x, du(x))=\Lip_{\delta_F}u(x)
\end{align*}
almost everywhere with $F(x,v)=\sqrt{\langle A(x)v,v\rangle}$? 

The answer to the question is yes when $A$ is supposed to be continuous, as shown by Sturm in~\cite[Proposition 4]{s97}. He also pointed out that the answer to this question is not always positive~\cite[Theorem 2]{s97}: for $F(x,v)=\sqrt{\langle A(x)v,v\rangle}$, where $A$ is a diffusion matrix, there exists $\tilde{F}(x,v)= \sqrt{\langle\tilde{A}(x)v,v\rangle}$ such that $\delta_F=\delta_{\tilde{F}}$ but 
\begin{align*}
F(x,v)<\tilde{F}(x,v)
\end{align*}
for all $v\in \bR^n\backslash \{0\}$; see also~\cite{kz12} for a different example.

The case $F(x,v)=\sqrt{\langle A(x)v,v\rangle}$ gained deeper understanding in a recent paper~\cite{ksz14}, where the authors enhanced Sturm's result by showing that if the diffusion matrix $A$ is weak upper semicontinuous, then the differential and distance structures coincide. They also constructed an example, which shows that if $A$ fails to be upper semicontinuous on a set of positive measure, then the differential and distance structure may fail to coincide.

The main purpose of this paper is to generalize the above result of~\cite{ksz14} to more general Finsler structures. More precisely, we are going to prove the following result.

\begin{theorem}\label{thm:coincide of differential and metric}
Let $n\geq 2$ and $F$ be an admissible Finsler structure on a domain $\Omega\subset \bR^n$. If $F$ is weak upper semicontinuous on $\Omega$, then the intrinsic distance and differential structure coincide. That is given a Lipschitz function $u$ on $\Omega$ (with respect to the Euclidean distance), for almost every $x\in \Omega$, we have
\begin{align*}
\Lip_{\delta_F}u(x)=F(x, du(x)).
\end{align*}
\end{theorem}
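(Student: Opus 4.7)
My strategy is to reduce the identity $\Lip_{\delta_F}u(x)=F(x,du(x))$ to a pointwise asymptotic description of $\delta_F$. Introduce the dual Finsler structure on tangent vectors,
\[
F^*(x,v)=\sup\{\langle\xi,v\rangle:F(x,\xi)\leq1\},
\]
so that by bipolar duality $F(x,\xi)=\sup_{v\neq0}\langle\xi,v\rangle/F^*(x,v)$. The admissibility of $F$ forces $F^*$ to be bi-Lipschitz equivalent to the Euclidean norm in the fibers, and weak upper semicontinuity of $F(\cdot,\xi)$ translates, via this duality, into a one-sided semicontinuity of $F^*(\cdot,v)$ for each fixed $v$.

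The key auxiliary estimate I aim for is the two-sided pointwise asymptotic
\[
\delta_F(x,y)=F^*(x,y-x)+o(|y-x|)\qquad\text{as }y\to x,
\]
valid at almost every $x\in\Omega$. Granted this, fix such an $x$ at which $u$ is also Rademacher differentiable. Combining the asymptotic with $u(y)-u(x)=\langle du(x),y-x\rangle+o(|y-x|)$ yields
\[
\Lip_{\delta_F}u(x)=\limsup_{y\to x}\frac{|\langle du(x),y-x\rangle|}{F^*(x,y-x)}=\sup_{v\neq0}\frac{|\langle du(x),v\rangle|}{F^*(x,v)}=F(x,du(x)),
\]
the last step being the duality between $F$ and $F^*$.

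To obtain the asymptotic I split it into two inequalities. The lower bound $\delta_F(x,y)\geq F^*(x,y-x)-o(|y-x|)$ is obtained by constructing near-optimal test functions in the supremum defining $\delta_F$: for each covector $\xi$ with $F(x,\xi)\leq1$ one builds a Lipschitz $w$ on $\Omega$ that coincides with the linear map $z\mapsto\langle\xi,z-x\rangle$ on a small ball $B(x,r)$ and is cut off outside, exploiting weak upper semicontinuity to guarantee $F(z,\xi)\leq1+\varepsilon$ on $B(x,r)$, after which a further small rescaling makes $w$ globally admissible. The upper bound $\delta_F(x,y)\leq F^*(x,y-x)+o(|y-x|)$ is obtained by noting that any admissible test function $u$ satisfies $u(y)-u(x)\leq\int_0^1F^*(x+s(y-x),y-x)\,ds$ along the straight segment from $x$ to $y$, and then invoking the one-sided control of $F^*$ at almost every $x$ provided by weak upper semicontinuity.

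I expect the main obstacle to be producing the globally admissible test function $w$ in the lower bound, i.e., one with $\|F(\cdot,dw)\|_{L^\infty(\Omega)}\leq1$, without degrading the local estimate $w(y)-w(x)\approx\langle\xi,y-x\rangle$ for $y$ near $x$, since a naive cut-off inflates the $F$-norm of $dw$ far from $x$. The natural remedy is a preliminary localization lemma asserting that $\delta_F(x,y)$ coincides, for $y$ in a small ball around $x$, with the analogous supremum over Lipschitz functions whose $F$-gradient is controlled only in a neighborhood of $x$; after this localization the cut-off construction becomes routine, and the theorem follows by combining the asymptotic with the duality of $F$ and $F^*$.
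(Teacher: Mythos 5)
Your overall scheme (reduce to the two-sided asymptotic $\delta_F(x,y)=F^*(x,y-x)+o(|y-x|)$ at a.e.\ $x$, then conclude by Rademacher differentiability and the bipolar identity $F=F^{**}$) is aimed at a true statement, and the final duality step is fine. But both halves of your asymptotic have genuine gaps. For the upper bound you invoke ``one-sided control of $F^*$'' coming from weak upper semicontinuity of $F$; this points the wrong way. Weak upper semicontinuity of $F(\cdot,\xi)$ is equivalent to weak \emph{lower} semicontinuity of $F^*(\cdot,v)$ (Proposition~\ref{prop:basic prop of dual}), so near $x$ you only get $F^*(z,v)\gtrsim F^*(x,v)$, which cannot bound $\int_0^1F^*(x+s(y-x),y-x)\,ds$ from above by $F^*(x,y-x)+o(|y-x|)$. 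There is also a measure-theoretic problem in the same step: the admissibility constraint $F(z,du(z))\le 1$ holds only for a.e.\ $z\in\Omega$, which gives no information along one fixed segment (a null set); this is exactly why $d_c^*$ is defined through curves transversal to null sets. The correct route to the upper bound uses no semicontinuity at all: $\delta_F\le d_c^*$ together with the general inequality $\Delta_{d_c^*}(x,v)\le F^*(x,v)$ a.e.\ (Remark~\ref{rmk:for compare metric derivative with differential}), plus an equicontinuity-in-direction argument to pass from directional limsups to the full limsup.

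For the lower bound, you correctly identify the obstruction (a cut-off destroys global admissibility) but then defer it to a ``localization lemma'' declared routine. It is not: the assertion that $\delta_F(x,y)$ is, for $y$ near $x$, not smaller (up to $o(|y-x|)$) than the supremum over functions admissible only near $x$ is essentially equivalent to the De Cecco--Palmieri comparison $\lim_{y\to x}\delta_F(x,y)/d_c^*(x,y)=1$ (Proposition~\ref{prop:key proposition}), which is the nontrivial input of the whole argument; postulating it begs the question. The paper's proof avoids your cut-off construction entirely: it takes Proposition~\ref{prop:key proposition} as the key geometric fact (proved in the appendix under weak usc via approximation by continuous structures), derives from weak usc the uniform estimate $F(z,v)\le(1+\varepsilon)F(x,v)$ for all $z\in B(x,\delta)$ and all $v$, and then bounds $|u(y)-u(x)|\le(1+\varepsilon)F(x,du(x))\,d_c^*(x,y)$ by integrating $\langle du(x),\gamma'\rangle\le F(\gamma,du(x))F^*(\gamma,\gamma')$ along arbitrary curves near $x$, which is where the hypothesis is really used (note that, by convexity of $F^*(x,\cdot)$, the same estimate also yields $d_c^*(x,y)\ge(1+\varepsilon)^{-1}F^*(x,y-x)$, so your asymptotic does hold --- but only after these two ingredients are in place). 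As it stands, your proposal either needs a proof of the localization lemma of comparable depth to Proposition~\ref{prop:key proposition}, or it should be restructured along the upper-gradient lines above.
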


The proof of~\cite[Theorem 2]{ksz14} relies heavily on the structure of  $F(x,v)=\sqrt{\langle A(x)v,v\rangle}$. It seems that there is little hope to adapt their proofs in the greater generality of this paper. 

To see an example where Theorem~\ref{thm:coincide of differential and metric} applies more generally than~\cite[Theorem 2]{ksz14}, we may choose suitable weighted $L^p$-norm with $1\leq p<\infty$. For instance, consider $F(x,v)=\big(\sum_{i=1}^n w(x)|v_i|^p\big)^{1/p}$, where the weight function $w$ is upper semicontinuous and satisfies the ellipticity condition $0<c\leq w(x)\leq C<\infty$ for all $x\in \bR^n$.

Theorem~\ref{thm:coincide of differential and metric} can be regarded as an improved version of~\cite[Proposition 2.4]{gpp06} from $L^\infty$-norm to pointwise equality.

Our proof of Theorem~\ref{thm:coincide of differential and metric} completely differs from that used in~\cite{ksz14} and it is simpler than~\cite{ksz14}, even in their setting. The crucial observation is Proposition 3.1 below, a special case of a result due to De Cecco and Palmieri~\cite{dp93}, which states that the intrinsic distance $\delta_F$ (infinitesimally) coincides with $d_c^*$, where $d_c^*$ is the distance induced by the Finsler structure $F$. The weak upper semicontinuity is crucial for our proof, since it implies that the ``metric density" of a curve with respect to the metric length coincides with its ``differential density"; see Section 4 below for the precise meaning. 
Our approach is more geometric and was influented a lot by the recent studies in Finsler geometry~\cite{dp93,dp95,bcs00,bbi01}. Some of the ideas from this paper were successfully used in our companion paper~\cite{gky15} on certain $L^\infty$-variational problems associated to measurable Finsler structures. It is known (e.g.~\cite{ags14,kz12}) that the intrinsic distance and differential structures coincide even for abstract Dirichlet forms on metric measure spaces. It would be interesting to know that whether a verion of Theorem~\ref{thm:coincide of differential and metric} holds in the abstract setting as there.

This paper is organized as follows. Section 2 contains all the preliminaries related to Finsler structures. Section 3 and Section 4 contain an overview of the necessary background that are needed for our proof of Theorem~\ref{thm:coincide of differential and metric}. In Section 5, we prove Theorem~\ref{thm:coincide of differential and metric}. The appendix contains a separate proof of Proposition~\ref{prop:key proposition} under the weak upper semicontinuity assumption.

\section{Preliminaries on Finsler structures}
Let $\Omega\subset\bR^n$, $n\geq 2$, be a domain, i.e., an open connected set.

\begin{definition}[Finsler structures]\label{def:finsler structure}
	We say that a function $F:\Omega\times \bR^n\to [0,\infty)$ is a Finsler structure on $\Omega$ if 
	\begin{itemize}
		\item $F(\cdot,v)$ is Borel measurable for all $v\in \bR^n$, $F(x,\cdot)$ is continuous for a.e. $x\in \Omega$;
		\item $F(x,v)>0$ for a.e. $x$ if $v\neq 0$;
		\item $F(x,\lambda v)=|\lambda| F(x,v)$ for a.e. $x\in \Omega$ and for all $\lambda\in \bR$ and $v\in \bR^n$.
		%\item $F(x,-v)=F(x,v)$. 
	\end{itemize}
\end{definition}

\begin{definition}[Admissible Finsler structures]\label{def:admissible finsler stru}
	A Finsler structure $F$ is said to be admissible if 
	\begin{itemize}
		\item $F(x,\cdot)$ is convex for a.e. $x\in \Omega$; 
		\item $F$ is locally equivalent to the Euclidean norm or elliptic, i.e.,  there exists a continuous function $\lambda:\Omega\to [1,\infty)$ such that
		\begin{align*}
			\frac{1}{\lambda(x)}|v|\leq F(x,v)\leq \lambda(x)|v|
		\end{align*}
		for a.e. $x\in \Omega$ and for all $v\in \bR^n$.
	\end{itemize}
\end{definition}

It is straightforward to verify that the standard $L^p$-norm ($1\leq p<\infty$), i.e., $F(x,v)=(\sum_{i=1}^nv_i^p)^{1/p}$, is an admissible Finsler structure on $\bR^n$. From the geometric point of view, there are many other interesting examples and we refer the interested readers to~\cite{bcs00} for the details.

Recall that a function $u:\Omega\to \bR$ is said to be upper semicontinuous at $x\in \Omega$ if 
\begin{align*}
	u(x)\geq \limsup_{y\to x}u(y).
\end{align*}
Following~\cite{ksz14}, we say that $u$ is weak upper semicontinuous in $\Omega$ if $u$ is upper semicontinuous at almost every $x\in \Omega$. 
Let $F$ be an admissible Finsler structure on $\Omega$. We say that $F$ is weak upper semicontinuous on $\Omega$ if for each $v\in \bR^n$, the function $F(\cdot,v)$ is weak upper semicontinuous on $\Omega$.

Similarly a function $u:\Omega\to \bR$ is said to be lower semicontinuous at $x\in \Omega$ if 
\begin{align*}
	u(x)\leq \liminf_{y\to x}u(y),
\end{align*}
and $u$ is weak lower semicontinuous in $\Omega$ if $u$ is lower semicontinuous at almost every $x\in \Omega$. 
Let $F$ be an admissible Finsler structure on $\Omega$. We say that $F$ is weak lower semicontinuous on $\Omega$ if for each $v\in \bR^n$, the function $F(\cdot,v)$ is weak lower semicontinuous on $\Omega$.

Let $F$ be an admissible Finsler structure for $\Omega$. We introduce the dual of $F:\Omega\times\bR^n\to [0,\infty)$ in the standard way.

\begin{definition}[Dual Finsler structures]\label{def:dual finsler}
The dual $F^*$ of an admissible Finsler structure $F:\Omega\times\bR^n\to [0,\infty)$ is defined as
\begin{align*}
F^*(x,w)&=\sup_{v\in \bR^n}\Big\{\langle v,w\rangle: F(x,v)\leq 1\Big\}\\
&=\max_{v\neq 0}\Big\{\langle w,\frac{v}{F(x,v)}\rangle \Big\},
\end{align*}
where $\langle \cdot,\cdot\rangle$ is the standard inner product in $\bR^n$.
\end{definition}

The following proposition follows immediately from Definition~\ref{def:dual finsler}; see for instance~\cite[Section 1.2]{gpp06} or~\cite[Section 2]{bd05} for more information.

\begin{proposition}[Basic properties of a dual Finsler structure]\label{prop:basic prop of dual} Let $F$ be an admissible Finsler structure on $\Omega$. Then the dual function $F^*$ satisfies the following properties
\begin{itemize}
\item $F^*(\cdot,v)$ is Borel measurable and $F^*(x,\cdot)$ is Lipschitz;
\item $F^*(x,\cdot)$ is a norm;
%$F^*(x,\cdot)$ is a positively homogenuous norm, \ie the homogenuous condition in the definition of a norm is replaced by $F^*(x,\lambda v)=\lambda F^*(x,v)$ for all $\lambda\geq 0$;  
\item $F^*(x,\cdot)$ is locally equivalent to the Euclidean norm, \ie 
\begin{align*}
\frac{1}{\lambda(x)}|v|\leq F^*(x,v)\leq \lambda(x)|v|.
\end{align*}
\item $(F^*)^*(x,v)= F(x,v)$;
\item $F$ is weak upper (lower) semicontinuous if and only if $F^*$ is weak lower (upper) semicontinuous.
%%nd the equiality holds if and only if $F(x,\cdot)$ is convex.
\end{itemize}
\end{proposition}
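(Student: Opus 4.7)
The plan is to verify the five items in order, treating the first four as routine consequences of the sup definition of $F^*$ and standard convex analysis, then giving careful attention to the duality of semicontinuity in item (5).

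For (1), the measurability of $F^*(\cdot,w)$ follows by reducing the sup to a countable dense subset $D\subset\bR^n\setminus\{0\}$, which is legitimate for a.e.\ $x$ by continuity of $F(x,\cdot)$; then $F^*(\cdot,w)$ appears as the pointwise sup of countably many Borel functions $x\mapsto\langle w,v\rangle/F(x,v)$, hence is Borel. The Lipschitz continuity of $F^*(x,\cdot)$ is immediate from the sublinearity of sup combined with the ellipticity bound $F^*(x,w)\leq\lambda(x)|w|$. Item (2) is direct from the definition: positive homogeneity from linearity of the pairing, triangle inequality from sublinearity of sup, and strict positivity away from $0$ from ellipticity. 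Item (3) is exactly the two-sided ellipticity estimate applied in the definition. For item (4), I would invoke the classical Fenchel--Moreau biconjugate theorem: for a.e.\ $x$, $F(x,\cdot)$ is convex, continuous, positively one-homogeneous, and coercive off the origin, so it equals its biconjugate, which in the positively homogeneous setting coincides with $(F^*)^*(x,\cdot)$.

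The substantive content is item (5). Assume $F(\cdot,v)$ is upper semicontinuous at a.e.\ $x$ for every $v\in\bR^n$. Fix $w\neq 0$ and a countable dense subset $D\subset\bR^n\setminus\{0\}$. Using continuity of $F(x,\cdot)$, positive homogeneity of $F$, and the observation that the sup is attained on $\{v:\langle w,v\rangle>0\}$, one rewrites
\begin{align*}
F^*(x,w)=\sup_{v\in D,\,\langle w,v\rangle>0}\frac{\langle w,v\rangle}{F(x,v)}
\end{align*}
for a.e.\ $x$. At a common point $x_0$ of upper semicontinuity of $F(\cdot,v)$ for all $v\in D$, each term $x\mapsto\langle w,v\rangle/F(x,v)$ is lower semicontinuous, whence $F^*(\cdot,w)$ is lower semicontinuous at $x_0$ as a supremum of lower semicontinuous functions. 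Since only countably many null sets are discarded, $F^*(\cdot,w)$ is weakly lower semicontinuous. The converse direction is symmetric via item (4), applied to $F^*$ in place of $F$.

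The main obstacle I anticipate is bookkeeping the ``a.e.'' qualifiers cleanly, since continuity in $v$, upper semicontinuity in $x$, and ellipticity each come with their own exceptional null sets, and in principle one must quantify over uncountably many test vectors $v$. The systematic remedy, used throughout, is the reduction of suprema to a fixed countable dense subset $D$, legitimate precisely because $F(x,\cdot)$ is continuous for a.e.\ $x$; this confines all exceptional sets to a single countable union of null sets.
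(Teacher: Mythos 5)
The paper does not actually prove Proposition~\ref{prop:basic prop of dual}: it states that it ``follows immediately from Definition~\ref{def:dual finsler}'' and refers to the references \cite{gpp06,bd05}. So there is no proof in the paper to compare against, and the proposal must be judged on its own terms.

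Items (1)--(4) of your argument are fine, modulo the usual ``a.e.'' bookkeeping that you flag yourself. The problem is item (5), where you prove only one of the two non-trivial directions. Taking $F^*(x,w)=\sup_{v}\langle w,v\rangle/F(x,v)$, the implication ``$F$ weak usc $\Rightarrow F^*$ weak lsc'' does follow from ``a supremum of lsc functions is lsc,'' as you write. But the assertion that ``the converse direction is symmetric via item (4)'' is wrong. Biduality turns the implication you proved, namely ``$G$ weak usc $\Rightarrow G^*$ weak lsc,'' applied to $G=F^*$, into ``$F^*$ weak usc $\Rightarrow F$ weak lsc.'' It does \emph{not} give you ``$F^*$ weak lsc $\Rightarrow F$ weak usc,'' nor its equivalent ``$F$ weak lsc $\Rightarrow F^*$ weak usc'' --- these are converses, not contrapositives or dual instances of what you proved. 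And they genuinely require a different argument, because a supremum of usc functions is not usc in general (take $f_n(x)=\min(1,n|x|)$ on $\bR$; $\sup_n f_n=\mathbf{1}_{\{x\neq 0\}}$ is lsc but not usc).

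The missing ingredient is a compactness argument. Restrict the sup in the definition of $F^*(x,w)$ to $|v|=1$ (legitimate by $0$-homogeneity of $v\mapsto\langle w,v\rangle/F(x,v)$), so $F^*(x,w)$ is a maximum over the compact sphere. Then use the ellipticity bound $F(x,v)\le\lambda(x)|v|$ together with convexity of $F(x,\cdot)$ and continuity of $\lambda$ to get a locally uniform Lipschitz bound $|F(x,v)-F(x,v')|\le\Lambda|v-v'|$ for $x$ near $x_0$. This upgrades weak lower semicontinuity of $F(\cdot,v)$ for $v$ in a countable dense set into \emph{joint} lower semicontinuity of $F$ at every point of $\{x_0\}\times\mathbb{S}^{n-1}$, for a.e.\ $x_0$. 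A maximum over a compact set of a jointly usc function is usc, which yields ``$F$ weak lsc $\Rightarrow F^*$ weak usc,'' and then item (4) finishes the remaining implication. Note that the paper itself carries out exactly this kind of compactness argument, for the usc side, in the proof of Theorem~\ref{thm:main result} (establishing~\eqref{eq:referee equation}); so this is not an optional refinement but precisely the step your ``symmetry'' appeal glosses over.
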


\section{Comparison of intrinsic distances}
Let $(\Omega,F(x,\cdot),d_c^F,\delta_F)$ be a Finsler manifold with an admissible Finsler structure $F$. 
For an admissible Finsler structure $F$ on $\Omega$, we may associate a distance in the standard way by setting

\begin{align*}
d_c^*(x,y)&:=\sup_{N}\inf_{\gamma\in \Gamma_N^{x,y}}\Big\{\int_0^1 F^*(\gamma(t),\gamma'(t))dt\Big\},
\end{align*}
where the supremum is taken over all subsets $N$ of $\Omega$ such that $|N|=0$ and $\Gamma_N^{x,y}(\Omega)$ denotes the set of all Lipschitz curves in $\Omega$ with end points $x$ and $y$ transversal to $N$,\ie such that $\mathscr{H}^1(N\cap \gamma)=0$. For an admissible Finsler metric $F$, $d_c^*$ is indeed an intrinsic distance; for the definition of an intrinsic distance and this fact, see~\cite{dp93,dp95}. Above, we use $|E|$ to denote the $n$-dimensional Lebesgue measure of a set $E\subset \bR^n$ and $\mathcal{H}^1$ the one-dimensional Hausdorff measure.

The following fundamental result, which relates $\delta_F$ and $d_c^*$, was a special case of~\cite[Theorem 3.7]{dp93}.
\begin{proposition}\label{prop:key proposition}
Let $F$ be an admissible Finsler structure on $\Omega$. Then for almost every $x\in \Omega$, it holds
\begin{align}\label{eq:infinitesimal coincidence}
\lim_{y\to x}\frac{\delta_F(x,y)}{d_c^*(x,y)}=1.
\end{align}
\end{proposition}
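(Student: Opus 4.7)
The plan is to decompose the claimed infinitesimal equality into two one-sided estimates: a global upper bound $\delta_F(x,y)\le d_c^*(x,y)$ valid for every $x,y\in\Omega$, and an infinitesimal reverse bound $\liminf_{y\to x}\delta_F(x,y)/d_c^*(x,y)\ge 1$ valid at almost every $x\in\Omega$. Combined with the ellipticity of $F$, which guarantees that $d_c^*(x,y)$ is locally comparable to $|x-y|$, these two estimates imply the infinitesimal identity \eqref{eq:infinitesimal coincidence}.

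For the global upper bound $\delta_F\le d_c^*$, the idea is to pair the competing test functions of $\delta_F$ against the admissible curves of $d_c^*$ via pointwise duality. Fix a Lipschitz $u$ with $F(z,du(z))\le 1$ at a.e.\ $z\in\Omega$, and let $N_u\subset\Omega$ be the null set where either $du$ fails to exist or this bound is violated. For any null set $N\supset N_u$ and any $\gamma\in\Gamma_N^{x,y}$, transversality $\mathscr{H}^1(N\cap\gamma)=0$ guarantees that $du(\gamma(t))$ exists and has $F$-norm at most one for $\mathscr{H}^1$-a.e.\ $t$. Combining the Lipschitz chain rule with the duality inequality $\langle du(\gamma(t)),\gamma'(t)\rangle\le F(\gamma(t),du(\gamma(t)))\,F^*(\gamma(t),\gamma'(t))$ yields
\[
u(x)-u(y) \le \int_0^1 F^*(\gamma(t),\gamma'(t))\,dt.
\]
Taking the infimum over $\gamma\in\Gamma_{N_u}^{x,y}$ and the supremum over admissible $u$, and noting that $N_u$ is a legitimate choice in the outer supremum defining $d_c^*$, one obtains $\delta_F(x,y)\le d_c^*(x,y)$.

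For the infinitesimal reverse inequality, I would fix a ``generic'' point $x_0$, to be identified so that the set of allowed base points has full Lebesgue measure. The goal is to produce a near-optimal test function for $\delta_F(x_0,y)$ from a distance function. The natural candidate is $u(\cdot)=d_c^*(x_0,\cdot)$, which by construction satisfies $u(y)-u(x_0)=d_c^*(x_0,y)$ and is Lipschitz by the ellipticity of $F^*$ established in Proposition~\ref{prop:basic prop of dual}. The crux is then to verify an Eikonal-type inequality $F(z,du(z))\le 1+\varepsilon$ a.e.\ on a small neighbourhood $B$ of $x_0$, with $\varepsilon\to 0$ as $B$ shrinks. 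Once such a bound is available, a rescaling of $u$ by $(1+\varepsilon)^{-1}$ becomes admissible in the supremum defining $\delta_F$ and yields $\delta_F(x_0,y)\ge (1-o(1))\,d_c^*(x_0,y)$ as $y\to x_0$, which is the desired asymptotic lower bound.

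The main obstacle lies precisely in this Eikonal estimate: in the absence of continuity hypotheses on $F$, the function $d_c^*$ is itself defined through a supremum over null sets, so its differential at a typical point need not be controlled by $F^*$ in the naive pointwise sense. The general case is handled in~\cite{dp93} by a delicate Fubini/coarea-type argument that absorbs the null set of bad points into the outer supremum $\sup_N$ of $d_c^*$. Under the weak upper semicontinuity assumption relevant to Theorem~\ref{thm:coincide of differential and metric}, one can argue more directly, as is sketched in the appendix: I would select $x_0$ to be simultaneously a Lebesgue point of $F(\cdot,v)$ for a countable dense family of vectors $v\in\bR^n$, use upper semicontinuity of each $F(\cdot,v)$ to obtain $F(z,v)\le F(x_0,v)+\varepsilon$ on a neighbourhood of $x_0$, and then compare both $d_c^*$ and $\delta_F$ to the frozen norm $F(x_0,\cdot)^*$ via a straight-line competitor argument, letting $\varepsilon\to 0$ to extract the infinitesimal identity.
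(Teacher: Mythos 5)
Your first half—the global bound $\delta_F(x,y)\le d_c^*(x,y)$ obtained by pairing an admissible $u$ against curves transversal to the null set where $F(\cdot,du(\cdot))>1$, via the chain rule and the duality inequality $\langle du(\gamma(t)),\gamma'(t)\rangle\le F(\gamma(t),du(\gamma(t)))\,F^*(\gamma(t),\gamma'(t))$—is correct and coincides with the paper's argument. The gap is in the reverse infinitesimal inequality, which is the actual content of the proposition. Your preferred route, taking $u=d_c^*(x_0,\cdot)$ as a test function, stands or falls with the Eikonal bound $F(z,du(z))\le 1$ a.e., and (since $|du(z)v|\le \Delta_{d_c^*}(z,v)$ and $F=F^{**}$) this is exactly the estimate $\Delta_{d_c^*}\le F^*$ of \eqref{eq:nocoincideness of metric derivative and differential}. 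You explicitly decline to prove it and refer "the general case" to \cite{dp93}; but the proposition under review is itself quoted from \cite[Theorem 3.7]{dp93}, so this is circular as a proof—one must either reproduce that Fubini-type argument, or supply a substitute, or at least invoke Remark~\ref{rmk:for compare metric derivative with differential} (i.e. \cite[Proposition 1.6]{gpp06}) explicitly.

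The substitute you sketch under weak upper semicontinuity does not close, because usc gives the wrong one-sided control at the step where you need it. From $F(z,v)\le F(x_0,v)+\varepsilon$ near $x_0$ (equivalently $F(z,\cdot)\le(1+\varepsilon')F(x_0,\cdot)$ by ellipticity) duality yields only the lower bound $F^*(z,\cdot)\ge (1+\varepsilon')^{-1}F^*(x_0,\cdot)$. That is useful for making the linear functions $\langle v,\cdot\rangle$ with $F(x_0,v)<1$ admissible for $\delta_F$ (this is how the paper exploits \eqref{eq:referee equation}), but it gives no upper bound on $F^*$ along the segment, whereas the comparison $d_c^*(x_0,y)\le(1+\varepsilon)F^*(x_0,y-x_0)$ requires precisely such an upper bound, i.e. lower semicontinuity of $F$, which is not assumed. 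Choosing $x_0$ a Lebesgue point does not rescue the straight-line competitor either: the segment is a null set, so Lebesgue-point information controls nothing on it, and the segment need not be transversal to the null sets $N$ in the $\sup_N$ defining $d_c^*$, so it is not even an admissible competitor for every $N$; repairing both defects forces the averaging over a tube of nearby parallel segments that you set aside as "delicate." The paper's appendix avoids this differently: it first proves the liminf inequality when $F(\cdot,v)$ is continuous (where the frozen-norm comparison is legitimate on both sides and the near-optimal linear test function is built from the dual maximizer), and then handles weak usc $F$ by monotone approximation $F_n^*\nearrow F^*$ by continuous structures with $d_c^{*n}\to d_c^*$, quoting \cite{d05}. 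Alternatively your Eikonal route can be closed by citing \eqref{eq:nocoincideness of metric derivative and differential} or Proposition~\ref{prop:metric derivative coincide with density}; but some such ingredient must be supplied, and as written the central estimate is missing.
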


Since we have assumed the weak upper semicontinuity on our admissible Finsler structure in our main result Theorem~\ref{thm:coincide of differential and metric}, we give a separate proof of Proposition~\ref{prop:key proposition} under this extra assumption in the appendix.

\section{Comparison of metric derivatives}
For any distance $d$ on $\Omega$ and each Lipschitz (with respect to $d$) curve $\gamma:[a,b]\to \Omega$, the length of $\gamma$ with respect to $d$ is denoted by $\mathcal{L}_d(\gamma)$, i.e., 
\begin{align*}
\mathcal{L}_d(\gamma):=\sup\big\{\sum_{i=1}^k d(\gamma(t_i),\gamma(t_{i+1})) \big\},
\end{align*}
where the supremum is taken over all partitions $\{[t_i,t_{i+1}]\}$ of $[a,b]$. 

Given a curve $\gamma$, the metric derivative of $\gamma$ at $t$ is defined to be
\begin{align*}
|\gamma'(t)|_d:=\limsup_{s\to 0}\frac{d(\gamma(t+s),\gamma(t))}{s}.
\end{align*}
If $\gamma:[a,b]\to \Omega$ is Lipschitz with respect to $d$, then its length can be computed by integrating the metric derivative, \ie
\begin{align*}
\mathcal{L}_d(\gamma)=\int_a^b|\gamma'(t)|_d dt.
\end{align*}
In other words, for a Lipschitz curve, the metric derivative is the metric density of its length.

For any intrinsic distance $d$, which is locally bi-Lipschitz equivalent to the Euclidean distance, we may associate a Finsler structure $\Delta_d$ in the following manner. For each $x\in \Omega$ and for every direction $v$, we define
\begin{align}\label{def:new finsler distance}
\Delta_d(x,v):=\limsup_{t\to 0^+}\frac{d(x,x+tv)}{t}.
\end{align} 
%It turns out that $\Delta_d$ is a convex Finsler metric. Moreover, 
It can be proved that for every Lipschitz curve $\gamma:[a,b]\to \Omega$, we have 
\begin{align*}
\mathcal{L}_d(\gamma)=\int_a^b\Delta_d(\gamma(t),\gamma'(t))dt.
\end{align*}
In particular, $\Delta_d(\gamma(t),\gamma'(t))=|\gamma'(t)|_{d}$ for a.e. $t\in [a,b]$.

\begin{remark}\label{rmk:for compare metric derivative with differential}
For any admissible Finsler structure $F$, one always has
\begin{align}\label{eq:nocoincideness of metric derivative and differential}
\Delta_{d_c^*}(x,v)\leq F^*(x,v)\quad \text{ for a.e. } x\in \Omega \text{ and all } v\in \bR^n;
\end{align}
see~\cite[Proposition 1.6]{gpp06}. However, the equality does not necessary hold; See~\cite[Example 5.1]{dp95} for a counter-example.
\end{remark}

In addition, for an admissible Finsler structure $F$, the dual Finsler structure $F^*$ always induces a lower semicontinuous length structure; see~\cite[Section 2.4.2]{bbi01}. Moreover, if the Finsler metric $F$ is weak upper semicontinuous on $\Omega$, then the following stronger result holds.
\begin{proposition}[Proposition 2.9,~\cite{bd05}]\label{prop:metric derivative coincide with density}
If the Finsler structure $F$ is weak upper semicontinuous on $\Omega$, then for a.e. $x\in \Omega$ and all $v\in \bR^n$, it holds
\begin{align*}
\Delta_{d_c^*}(x,v)=F^*(x,v).
\end{align*}
\end{proposition}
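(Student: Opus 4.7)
The strategy is to combine the easy inequality $\Delta_{d_c^*}(x,v)\leq F^*(x,v)$ from Remark~\ref{rmk:for compare metric derivative with differential} with its converse, which is exactly where the weak upper semicontinuity of $F$ enters. It therefore suffices to establish
\[
\liminf_{t\to 0^+}\frac{d_c^*(x_0,x_0+tv)}{t}\geq F^*(x_0,v)
\]
for almost every $x_0\in\Omega$ and every $v\in\bR^n$.

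Fix a countable dense subset $D\subset\bR^n$ and let $A$ be the intersection of the full-measure sets on which $F(\cdot,w)$ is upper semicontinuous for each $w\in D$, together with the full-measure set on which $F(x_0,\cdot)$ and $F^*(x_0,\cdot)$ are both norms. Fix $x_0\in A$, $v\in\bR^n$, and $\epsilon>0$. By the variational definition of $F^*$, combined with the density of $D$ and the continuity of $F(x_0,\cdot)$, we may choose $v_0\in D$ with $a:=F(x_0,v_0)>0$ and $\langle v_0,v\rangle/a\geq F^*(x_0,v)-\epsilon$. Upper semicontinuity of $F(\cdot,v_0)$ at $x_0$ yields a radius $r>0$ such that $F(y,v_0)\leq a+\epsilon$, and hence by duality $F^*(y,w)\geq \langle v_0,w\rangle/(a+\epsilon)$ for every $w\in\bR^n$, at almost every $y\in B(x_0,r)$. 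Let $N\subset B(x_0,r)$ denote this exceptional null set, augmented by the null set needed to secure a uniform ellipticity bound $F^*(y,w)\geq c|w|$ for a.e. $y\in B(x_0,r)$ (available thanks to continuity of $\lambda$).

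By the definition of $d_c^*$ as a supremum over null sets,
\[
d_c^*(x_0,x_0+tv)\geq \inf_{\gamma\in\Gamma_N^{x_0,x_0+tv}}\int_0^1 F^*(\gamma(s),\gamma'(s))\,ds.
\]
For $t$ so small that $t|v|<r/2$, any competing Lipschitz $\gamma$ either remains in $B(x_0,r)$, in which case transversality to $N$ ensures $F^*(\gamma(s),\gamma'(s))\geq \langle v_0,\gamma'(s)\rangle/(a+\epsilon)$ for a.e. $s$, so integration telescopes to
\[
\int_0^1 F^*(\gamma,\gamma')\,ds\geq \frac{\langle v_0,\gamma(1)-\gamma(0)\rangle}{a+\epsilon}=\frac{t\langle v_0,v\rangle}{a+\epsilon},
\]
or else $\gamma$ leaves $B(x_0,r)$, in which case the uniform ellipticity bound forces $\int_0^1 F^*(\gamma,\gamma')\,ds\geq c(2r-t|v|)$, a quantity exceeding $t\langle v_0,v\rangle/(a+\epsilon)$ for all sufficiently small $t$. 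Hence $d_c^*(x_0,x_0+tv)\geq t\langle v_0,v\rangle/(a+\epsilon)$ for such $t$. Dividing by $t$, taking $\liminf$ as $t\to 0^+$, and then letting $\epsilon\to 0$ using $\langle v_0,v\rangle/a\geq F^*(x_0,v)-\epsilon$, we arrive at the required inequality.

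The main technical obstacle is the careful bookkeeping of null sets. Since $F$ is only almost everywhere defined in its first variable, since upper semicontinuity of $F(\cdot,v_0)$ is valid only at a.e. $x_0$, and since the supremum defining $d_c^*$ runs over all null sets simultaneously, we need the single set $N$ to enforce at once the pointwise bound $F(\cdot,v_0)\leq a+\epsilon$ on $B(x_0,r)$, the uniform ellipticity of $F^*$ on $B(x_0,r)$, and the transversality that legitimizes the almost-everywhere chain of inequalities along the curve. The reduction to the countable family $D$ is what renders this compatible with one null set, and the continuity of the norm $F(x_0,\cdot)$ at the admissible point $x_0$ is what permits this discretization to approximate $F^*(x_0,v)$ to within $\epsilon$.
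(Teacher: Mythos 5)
The paper does not actually prove this proposition: it is quoted verbatim from Briani--Davini \cite[Proposition 2.9]{bd05}, so there is no in-paper argument to compare against. Your proposal is a self-contained direct proof, and it is essentially correct; it follows the standard De Cecco--Palmieri/Briani--Davini line of reasoning: take the inequality $\Delta_{d_c^*}\le F^*$ from Remark~\ref{rmk:for compare metric derivative with differential} for granted, and for the converse use upper semicontinuity of $F(\cdot,v_0)$ at a good point $x_0$ to make the linear function $z\mapsto\langle v_0,z\rangle/(a+\epsilon)$ an ``almost calibration'' on a small ball, so that every curve transversal to a suitably chosen null set $N$ has $F^*$-length at least $\langle v_0,\gamma(1)-\gamma(0)\rangle/(a+\epsilon)$, while curves escaping the ball are penalized by ellipticity. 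The reduction to a countable dense set $D$ of directions is exactly the right device to obtain a single full-measure set of good points $x_0$ valid for all $v$ simultaneously, which is what the statement requires.

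Two details should be tightened. First, transversality only gives $\mathscr{H}^1(\gamma\cap N)=0$; this does not prevent $\gamma(s)\in N$ for a set of parameters $s$ of positive measure. The standard remedy: by the area formula, $\int_{\gamma^{-1}(N)}|\gamma'(s)|\,ds=0$, so $\gamma'(s)=0$ for a.e.\ $s\in\gamma^{-1}(N)$, and at such $s$ the inequality $F^*(\gamma(s),\gamma'(s))\ge\langle v_0,\gamma'(s)\rangle/(a+\epsilon)$ holds trivially (both sides vanish); this is what legitimizes the a.e.\ chain of inequalities you assert. Second, in the final passage $\epsilon\to0$ you conclude from $\langle v_0,v\rangle/a\ge F^*(x_0,v)-\epsilon$ that $\langle v_0,v\rangle/(a+\epsilon)\to F^*(x_0,v)$, but this needs $a=F(x_0,v_0)$ bounded away from $0$ uniformly in $\epsilon$, since otherwise $a/(a+\epsilon)$ need not tend to $1$. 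This is easily arranged: the map $w\mapsto\langle v,w\rangle/F(x_0,w)$ is $0$-homogeneous, so you may pick $v_0\in D$ with, say, $\tfrac12\le|v_0|\le2$, whence ellipticity gives $a\ge 1/(2\lambda(x_0))$. With these two one-line repairs the argument is complete.
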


\section{Coincidence of distance structure and differential structure}
In this section, we are ready to prove our main result Theorem~\ref{thm:coincide of differential and metric}. 

\begin{proposition}\label{prop:trivial part}
For each $u\in \Lip_{\delta_F}(\Omega)$, $F(x, du(x))\leq \Lip_{\delta_F}u(x)$ for a.e. $x\in \Omega$.
\end{proposition}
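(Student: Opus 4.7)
The plan is to transform the pointwise metric Lipschitz bound into a bound on $F(x, du(x))$ via the biduality $F = (F^*)^*$ (Proposition~\ref{prop:basic prop of dual}). The two key tools are the infinitesimal comparison $\delta_F(x,y)/d_c^*(x,y) \to 1$ from Proposition~\ref{prop:key proposition} and the inequality $\Delta_{d_c^*}(x,v) \leq F^*(x,v)$ from Remark~\ref{rmk:for compare metric derivative with differential}; together they control the decay rate of $\delta_F(x, x+tv)$ along a straight-line approach.

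First I would fix a countable dense set $D \subset \bR^n \setminus \{0\}$ and restrict attention to a full-measure set of $x \in \Omega$ where, simultaneously: $u$ is Euclidean-differentiable at $x$ (which is legitimate a.e., since ellipticity and $u \in \Lip_{\delta_F}(\Omega)$ force $u$ to be Euclidean Lipschitz, so Rademacher's theorem applies); the biduality $(F^*)^*(x, \cdot) = F(x, \cdot)$ is valid; the conclusion of Proposition~\ref{prop:key proposition} holds; and $\Delta_{d_c^*}(x, v) \leq F^*(x, v)$ for every $v \in D$.

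At such a point $x$ and for each fixed $v \in D$, the heart of the argument is the factorisation
\begin{equation*}
\frac{|u(x+tv) - u(x)|}{t} \;=\; \frac{|u(x+tv) - u(x)|}{\delta_F(x, x+tv)} \cdot \frac{\delta_F(x, x+tv)}{t}.
\end{equation*}
The left-hand side tends to $|\langle du(x), v\rangle|$ as $t \to 0^+$. Using $\delta_F/d_c^* \to 1$ together with Remark~\ref{rmk:for compare metric derivative with differential}, the second factor on the right satisfies $\limsup_{t \to 0^+} \delta_F(x, x+tv)/t \leq F^*(x, v)$. Picking a sequence $t_n \to 0^+$ that realises this $\limsup$, the convergence of the product forces the first factor to converge along $t_n$, and its limit is at most $\Lip_{\delta_F} u(x)$ directly from the definition. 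This yields
\begin{equation*}
|\langle du(x), v\rangle| \;\leq\; \Lip_{\delta_F} u(x) \cdot F^*(x, v)
\end{equation*}
for every $v \in D$, and hence for every $v \in \bR^n$ by density of $D$ combined with the Lipschitz continuity of $F^*(x, \cdot)$ granted by Proposition~\ref{prop:basic prop of dual}. Taking the supremum over $\{v : F^*(x, v) \leq 1\}$ and invoking the biduality then delivers $F(x, du(x)) \leq \Lip_{\delta_F} u(x)$.

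The main delicate point I anticipate is the product-of-limits manoeuvre: since $\delta_F(x, x+tv)/t$ is only a $\limsup$ a priori rather than an honest limit, convergence of the metric Newton quotient must be extracted along a subsequence by exploiting the known convergence of the product. The degenerate case in which this $\limsup$ vanishes is automatic, because the first factor is uniformly bounded by $\Lip_{\delta_F}(u,\Omega)$, so the product must vanish and the target inequality becomes trivial. Making the null exceptional set independent of $v$ is a routine diagonalisation via the countable dense set $D$ and joint continuity in $v$ of both sides.
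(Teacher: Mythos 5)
Your proposal is correct and takes essentially the same route as the paper: both arguments factor the Newton quotient of $u$ through the intrinsic distance, combine Proposition~\ref{prop:key proposition} with the inequality~\eqref{eq:nocoincideness of metric derivative and differential} to bound the resulting product by $\Lip_{\delta_F}u(x)\,F^*(x,v)$, and close via the biduality $F=(F^*)^*$. The only cosmetic difference is that you factor through $\delta_F$ while the paper factors through $d_c^*$ (the two being infinitesimally interchangeable by Proposition~\ref{prop:key proposition}), and you spell out the subsequence extraction and the degenerate $\limsup=0$ case, which the paper subsumes in the general inequality $\limsup(a_tb_t)\le(\limsup a_t)(\limsup b_t)$ for nonnegative quantities.
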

\begin{proof}
Since both sides are positively 1-homogeneous with respect to $u$, we only need to show that for a.e. $x\in \Omega$, if $\Lip_{\delta_F}u(x)= 1$, then $F(x, du(x))\leq 1$. 

Note that by Proposition~\ref{prop:key proposition}, for a.e. $x\in \Omega$, $\Lip_{\delta_F}u(x)=\Lip_{d_c^*}u(x)$. Fix such an $x$. For each $v\in \bR^n$, we have 
\begin{align*}
du(x)v&=\lim_{t\to 0}\frac{u(x+tv)-u(x)}{t}\\
&\leq \limsup_{t\to 0}\frac{d_c^*(x,x+tv)}{t}\cdot\limsup_{t\to 0}\frac{u(x+tv)-u(x)}{d_c^*(x,x+tv)}\\
&\leq \Delta_{d_c^*}(x,v)\Lip_{d_c^*}u(x)\leq F^*(x,v),
\end{align*}
where in the last inequality, we have used the inequality~\eqref{eq:nocoincideness of metric derivative and differential}.

Therefore, 
\begin{align*}
F(x, du(x))&=F^{**}(x,du(x))\\
&=\max_{v\neq 0} \Big\{du(x)\Big(\frac{v}{F^*(x,v)}\Big)\Big\}\leq 1
\end{align*}
as desired. This completes our proof.
\end{proof}

\begin{theorem}\label{thm:main result}
Let $F$ be an admissible Finsler structure on $\Omega$. If $F$ is weak upper semicontinuous on $\Omega$, then for any Lipschitz function $u$ in $(\Omega,\delta_F)$, $F(\cdot, du(\cdot))$ is an upper gradient of $u$. In particular, this implies that 
$$\Lip_{\delta_F}u(x)\leq F(x, du(x))$$
for a.e. $x\in \Omega$. 
\end{theorem}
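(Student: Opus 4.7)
The plan is to establish the upper-gradient assertion via a chain-rule computation along curves, and then to derive the pointwise bound $\Lip_{\delta_F}u(x)\le F(x,du(x))$ as a consequence by combining the upper-gradient inequality with the infinitesimal identifications from Propositions~\ref{prop:key proposition} and~\ref{prop:metric derivative coincide with density}. The weak upper semicontinuity of $F$ enters essentially through Proposition~\ref{prop:metric derivative coincide with density}, which upgrades the one-sided bound of Remark~\ref{rmk:for compare metric derivative with differential} to the tight identity $\Delta_{d_c^*}(x,v)=F^*(x,v)$ that the argument needs. The previously established Proposition~\ref{prop:trivial part} will not be used; the content here is genuinely the \emph{opposite} inequality.

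\textbf{Upper-gradient step.} Let $N\subset\Omega$ be the Lebesgue-null set consisting of points $z$ at which $u$ fails to be differentiable, $(F(z,\cdot),F^*(z,\cdot))$ fails to be a reciprocal dual pair of norms in the sense of Proposition~\ref{prop:basic prop of dual}, or the identity $\Delta_{d_c^*}(z,v)=F^*(z,v)$ fails. For a Lipschitz curve $\gamma:[a,b]\to\Omega$ joining $x$ to $y$ and transversal to $N$ (a choice compatible with the $\sup_N$ in the definition of $d_c^*$), the composition $u\circ\gamma$ is absolutely continuous and the chain rule gives $(u\circ\gamma)'(t)=du(\gamma(t))\gamma'(t)$ for a.e.\ $t$. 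The dual-norm inequality $|\langle v,w\rangle|\le F(z,w)F^*(z,v)$, immediate from Definition~\ref{def:dual finsler}, together with $F^*(\gamma(t),\gamma'(t))=|\gamma'(t)|_{d_c^*}$ a.e.\ (Proposition~\ref{prop:metric derivative coincide with density}), yields
\begin{align*}
|(u\circ\gamma)'(t)|\le F(\gamma(t),du(\gamma(t)))\,|\gamma'(t)|_{d_c^*}\qquad\text{a.e.\ }t,
\end{align*}
and integration produces $|u(y)-u(x)|\le \int_\gamma F(\cdot,du(\cdot))\,ds_{d_c^*}$. Since Proposition~\ref{prop:key proposition} forces $\delta_F$ and $d_c^*$ to induce the same metric derivative (hence the same length measure) on Lipschitz curves, the same inequality holds with $ds_{\delta_F}$, proving the upper-gradient claim. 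For the pointwise bound, I work at a generic $x$ — a point of differentiability of $u$, a Lebesgue point of $F(\cdot,du(\cdot))$, a point of infinitesimal coincidence $\delta_F\sim d_c^*$, a point of upper semicontinuity of $F(\cdot,du(x))$, and a point of approximate continuity of $z\mapsto du(z)$ — and choose $d_c^*$-quasi-geodesics $\gamma_y$ from $x$ to $y$ with $\mathcal L_{d_c^*}(\gamma_y)\le(1+\varepsilon)d_c^*(x,y)$ confined to a shrinking neighbourhood of $x$. Using weak upper semicontinuity of $F$ to control $F(z,du(x))$ for $z$ near $x$, and approximate continuity of $du$ to pass from $du(z)$ to $du(x)$, the arclength average of $F(\cdot,du(\cdot))$ along $\gamma_y$ is bounded by $F(x,du(x))+o(1)$ as $y\to x$; dividing the upper-gradient inequality by $d_c^*(x,y)$ and invoking Proposition~\ref{prop:key proposition} then gives the claim.

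\textbf{Main obstacle.} The technically delicate point is the passage from the integral upper-gradient inequality to the pointwise bound: the arclength average of $z\mapsto F(z,du(z))$ along a 1-dimensional quasi-geodesic must be controlled by its value at $x$, and although weak upper semicontinuity of $F$ tames the first variable, the rapid variation of $du$ off measure-zero sets obstructs a naive arclength-average estimate (approximate continuity of $du$ is an $\mathcal{L}^n$-density statement, not an $\mathcal{H}^1$-density statement along curves). One must therefore combine the upper semicontinuity of $F$ with a Vitali- or maximal-function-type averaging argument, or alternatively bypass the upper gradient and argue directly from Rademacher-differentiability of $u$ at $x$, the duality inequality, and the tangent-norm identification $d_c^*(x,x+h)=F^*(x,h)+o(|h|)$ derivable at a.e.\ $x$ from Proposition~\ref{prop:metric derivative coincide with density}. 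A secondary, more routine, subtlety is justifying the transversality $\mathcal{H}^1(\gamma\cap N)=0$ in the upper-gradient step, which is automatic inside the $\sup_N$ defining $d_c^*$ but should be addressed if one wishes to interpret the upper-gradient property literally for every rectifiable curve in $(\Omega,\delta_F)$.
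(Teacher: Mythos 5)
Your upper-gradient step (the chain rule along curves transversal to the bad null set, plus the duality inequality $|\langle v,w\rangle|\le F(z,w)F^*(z,v)$) is sound and matches the kind of computation the paper itself uses elsewhere. The genuine gap is exactly where you flag it: the passage from the integral inequality to the pointwise bound $\Lip_{\delta_F}u(x)\le F(x,du(x))$ is not proved. Your plan requires bounding the arclength average of $z\mapsto F(z,du(z))$ along $d_c^*$-quasi-geodesics by $F(x,du(x))+o(1)$, but $du$ is only defined and approximately continuous $\mathcal{L}^n$-a.e., and a curve is $\mathcal{L}^n$-null, so there is no control whatsoever on $du(\gamma(t))$ along the curve; no Vitali or maximal-function device is supplied, and it is unclear any exists at this level of generality. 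The same $\mathcal{L}^n$-versus-$\mathcal{H}^1$ issue also infects your side remark that Proposition~\ref{prop:key proposition} gives ``the same length measure'' along a given Lipschitz curve: an a.e.-in-$\Omega$ statement does not restrict to $\mathcal{H}^1$-a.e. points of a fixed curve. Your fallback sketch (a tangent-norm identification $d_c^*(x,x+h)=F^*(x,h)+o(|h|)$ from Proposition~\ref{prop:metric derivative coincide with density}) is also not immediate: $\Delta_{d_c^*}(x,v)$ is a radial $\limsup$ for each fixed $v$ at a.e. $x$, which gives neither a two-sided limit nor uniformity in the direction, and the lower bound on $d_c^*(x,y)$ is precisely the nontrivial content.

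The missing idea, which is how the paper argues, is to never evaluate $du$ along the curve at all. By Rademacher one reduces to the frozen covector $v=du(x)$ (the linear function $z\mapsto\langle v,z\rangle$ captures $\Lip u$ at a differentiability point, since $\delta_F$, $d_c^*$ are locally comparable to the Euclidean distance). The key lemma is that weak upper semicontinuity self-improves at a.e. $x$ to a \emph{direction-uniform} estimate: for every $\varepsilon>0$ there is $\delta>0$ with $F(y,w)\le(1+\varepsilon)F(x,w)$ for all $y\in B(x,\delta)$ and all $w$; this follows from homogeneity and compactness of the unit sphere. Then for any competitor curve $\gamma$ from $x$ to $y$ inside $B(x,\delta)$,
\begin{align*}
|\langle v,y-x\rangle|=\Big|\int_0^1\langle v,\gamma'(t)\rangle\,dt\Big|
\le\int_0^1 F(\gamma(t),v)\,F^*(\gamma(t),\gamma'(t))\,dt
\le(1+\varepsilon)F(x,v)\,\mathcal{L}_{d_c^*}(\gamma),
\end{align*}
so $|\langle v,y-x\rangle|\le(1+\varepsilon)F(x,v)\,d_c^*(x,y)$ for $y$ near $x$; letting $y\to x$, $\varepsilon\to0$ and invoking Proposition~\ref{prop:key proposition} to replace $d_c^*$ by $\delta_F$ gives the claim. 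Note this route needs neither Proposition~\ref{prop:metric derivative coincide with density} nor any averaging of $F(\cdot,du(\cdot))$ along curves; without the uniform semicontinuity lemma (or a workable substitute for your averaging step), your proposal does not close.
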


\begin{proof}
	First, note that our assumption on $F$ implies that $F$ satisfies the following uniform upper semicontinuity property, for a.e. $x\in \Omega$,
	\begin{align}\label{eq:referee equation}
		\forall \varepsilon>0, \, \exists\, \delta>0:\, F(y,v)\leq (1+\varepsilon) F(x,v)\quad \text{for all } y\in B(x,\delta),\, v\in \bR^n.
	\end{align}
	By homogeneity of $F$ (with respect to $v$), it suffices to prove~\eqref{eq:referee equation} for all $v\in \mathbb{S}$ (the unit sphere). Suppose by contradiction, that~\eqref{eq:referee equation} fails. Then there exist some $x\in \Omega$ and some $\varepsilon_0>0$ such that for each $k\in \mathbb{N}$, there exist some $y_k\in B(x,\frac{1}{k})$ and $v_k\in \mathbb{S}$  so that 
	\begin{align}
	F(y_k,v_k)>(1+\varepsilon_0)F(x,v_k).	
	\end{align}
	By compactness of $\mathbb{S}$, we may assume (up to another subsequence if necessary) $v_k\to v\in \mathbb{S}$ as $k\to \infty$. Then
	\begin{align*}
		F(x,v)&=\limsup_{k\to \infty}F(x,v_k)\geq \limsup_{k\to \infty}\limsup_{y\to x}F(y,v_k)\\
		&\geq \limsup_{k\to \infty}F(y_k,v_k)\geq \limsup_{k\to \infty}(1+\varepsilon_0)F(x,v_k)\\
		&=(1+\varepsilon_0)F(x,v),
	\end{align*}
	which is a contradiction. 
	
	Secondly, by Rademacher's Theorem, it suffices to prove Theorem~\ref{thm:main result} when $u(x)=\langle v,x\rangle$ is linear. We may additionally assume that $v\neq 0$. By the fundamental theorem of calculus and the definition of $F^*$, we have
	\begin{align*}
	|u(x)-u(y)|&=|\langle v,y-x\rangle|=\Big|\int_0^1 \frac{d}{dt}u(\gamma(t))dt\Big|\\
	&=\Big|\int_0^1\langle v,\gamma'(t)\rangle dt\Big|\leq (1+\varepsilon)F(x,v)\int_0^1 F^*(\gamma(t),\gamma'(t))dt	
	\end{align*}
	whenever $x, y$ and $\gamma(t)$ belongs to the ``$\delta$-neighborhood of $x$ where~\eqref{eq:referee equation} holds; it follows	that
	\begin{align*}
		\frac{|\langle v,y-x\rangle|}{d_c^*(x,y)}\leq (1+\varepsilon)F(x,v),
	\end{align*}
	whenever $|x-y|<\delta$. Letting $y\to x$ and $\varepsilon\to 0$ concludes our proof.
	
\end{proof}

\section*{Appendix: Proof of Proposition~\ref{prop:key proposition} when $F$ is weak upper semicontinuous}

\begin{proof}

	The inequality $\delta_F(x,y)\leq d_c^*(x,y)$ follows directly from definitions. Indeed, for each Lipschitz function $u$ with $\|F(\cdot, du(\cdot))\|_{L^{\infty}(\Omega)}\leq 1$, each $x,y\in \Omega$, for each Lipschitz curve $\gamma$ joining $x$ and $y$ that is transversal to the zero measure set $N=\{x\in \Omega:F(x, du(x))> 1\}$,
	\begin{align*}
		u(x)-u(y)&=\int_0^1  du(\gamma(t))\big(\gamma'(t)\big) dt\\
		&\leq \int_0^1 F^*(\gamma(t),\gamma'(t))dt=\mathcal{L}_{d_c^*}(\gamma),
	\end{align*}
	where $\mathcal{L}_{d_c^*}$ denotes the length of the curve $\gamma$ with respect to the metric $d_c^*$.	Taking infimum over all admissible curves on the right-hand side and then supermum over all admissible functions over the left-hand side, we obtain via Proposition~\ref{prop:metric derivative coincide with density} that 
	\begin{align*}
		\delta_F(x,y)\leq d_c^*(x,y).
	\end{align*} 
	In particular, 
	\begin{align*}
		\limsup_{y\to x}\frac{\delta_F(x,y)}{d_c^*(x,y)}\leq 1.
	\end{align*}
	
	We are left to prove that
	\begin{align}\label{eq:liminf ineq}
		\liminf_{y\to x}\frac{\delta_F(x,y)}{d_c^*(x,y)}\geq 1.
	\end{align}
	We divide the proof of this equation into two steps.
	
	\textit{Step 1:} assume that $F(\cdot,v)$ is continuous.
	
	Fix $x\in \Omega$ and $\varepsilon>0$. Since $F(\cdot,v)$ and $F^*(\cdot,v)$ are continuous in $B(x,\delta)$, we may assume that for all $z\in B(x,\delta)$, 
	\begin{align*}
		(1-\varepsilon)F(z,v)\leq F(x,v)\leq (1+\varepsilon)F(z,v)
	\end{align*}
	and 
	\begin{align*}
		(1-\varepsilon)F^*(z,v)\leq F^*(x,v)\leq (1+\varepsilon)F^*(z,v).
	\end{align*}
	Note that the issue is local, we are now restricting ourself to the ball $B(x,\delta)$.
	
	Consider the curve $\gamma(t)=x+t(y-x)$, we have
	\begin{align*}
		d_c^*(x,y)\leq \mathcal{L}_{d_c^*}(\gamma)=\int_0^1F^*(\gamma(t),\gamma'(t))dt\leq (1+\varepsilon)F^*(x,y-x).
	\end{align*}
	%By the Hahn-Banach theorem, there exists $v$ such that $F(x,v)=1/(1+\varepsilon)$ and $\langle v,y-x\rangle=1/(1+\varepsilon)F(x,y-x)$. 
	By the definition of a dual Finsler structure, we know that there exists some $\tilde{v}\neq 0$ such that $F^*(x,y-x)=\langle y-x,\frac{\tilde{v}}{F(x,\tilde{v})}\rangle$. Set 
	$$v:=\frac{\tilde{v}}{(1+\varepsilon)F(x,\tilde{v})}.$$
	Then $F(x,v)=\frac{1}{1+\varepsilon}$ and $\langle v,y-x\rangle=\frac{1}{1+\varepsilon}F^*(x,y-x)$. Note that for all $z\in B(x,\delta)$,
	$F(z,v)\leq (1+\varepsilon)F(x,v)\leq 1$ and so the function $u(z):=\langle v,z\rangle$ is an admissible function for $\delta_F(x,y)$. This means that
	\begin{align*}
		\delta_F(x,y)\geq u(y)-u(x)=1/(1+\varepsilon)F^*(x,y-x)\geq \frac{1}{(1+\varepsilon)^2}d_c^*(x,y).
	\end{align*}
	It is clear that~\eqref{eq:liminf ineq} follows from the above inequality by letting $\varepsilon\to 0$. 
	
	\textit{Step 2:} assume that $F(\cdot,v)$ is weak upper semicontinuous.
	
	%Since the proof if similar, we only deal with the case when $F(\cdot,v)$ is upper semicontinuous. 
	In this case, $F^*$ is weak lower semicontinuous, it is a well-known fact that there exists a sequence of admissible Finsler norms $F_n^*(\cdot,v)$, which is continuous in the first variable, such that 
	\begin{align*}
		F_n(x,v)^*\leq F_{n+1}^*(x,v)\leq \cdots \to F^*(x,v);
	\end{align*}
	and $d_c^{*n}\to d_c^*$ as $n\to \infty$, where $d_c^{*n}$ is the distance induced by the Finsler structure $F_n$; see for instance~\cite[Section 4]{d05}. Let $F_n=F_n^{**}$ denote the dual of $F_n^*$, then it is easy to check from our definition that 
	\begin{align*}
		F_n(x,v)\geq F_{n+1}(x,v)\geq \cdots\to F(x,v).
	\end{align*} 
	It follows that
	\begin{align*}
		\frac{\delta_F(x,y)}{d_c^*(x,y)}=\lim_{n\to \infty}\frac{\delta_{F_n}(x,y)}{d_c^{*n}(x,y)},
	\end{align*}
	where $\delta_{F_n}$ is the intrinsic distance induced by $F_n$ similar as $\delta_{F}$. Given $\varepsilon>0$, there exists $N_0$ such that for all $n\geq N_0$, 
	\begin{align*}
		\frac{\delta_F(x,y)}{d_c^*(x,y)}\geq (1-\varepsilon)\frac{\delta_{F_n}(x,y)}{d_c^{*n}(x,y)}.
	\end{align*}
	On the other hand, by step 1, 
	\begin{align*}
		\liminf_{y\to x}\frac{\delta_{F_n}(x,y)}{d_c^{*n}(x,y)}\geq 1.
	\end{align*}
	We thus obtain
	\begin{align*}
		\liminf_{y\to x}\frac{\delta_{F_n}(x,y)}{d_c^{*n}(x,y)}\geq \liminf_{y\to x}(1-\varepsilon)\frac{\delta_{F_n}(x,y)}{d_c^{*n}(x,y)}\geq 1-\varepsilon.
	\end{align*}
	The claim follows by letting $\varepsilon\to 0$.

\end{proof}

\textbf{Acknowledgements}

The author would like to thank Professor Pekka Koskela, Professor Yuan Zhou and Dr. Changlin Xiang for helpful discussions. He is also very grateful to Professor Luigi Ambrosio, Professor Andrea Davini and Professor Giuliana Palmieri for their interests in this work. In particular, he is grateful to Professor Giuliana Palmieri, who pointed out a mistake in an earlier version of this paper. Finally, he would like to thank the anonymous referees for their insightful comments that greatly increased the readability of the paper.

\end{document}